\documentclass[12pt]{amsart}
\usepackage{amsthm}
\usepackage{amsmath}
\usepackage{amssymb}
\usepackage{mathrsfs}
\usepackage{enumerate}
\usepackage{float}
\usepackage{pgf}
\usepackage{url, multicol}
\usepackage[all]{xy}
\usepackage{graphicx}
\usepackage{hyperref}
\usepackage[noabbrev,nosort,capitalise]{cleveref}
\usepackage[margin = 1 in]{geometry}

\newcommand{\Stir}[3]{\genfrac{[}{]}{0pt}{}{#1}{#2}_{#3}}
\renewcommand{\S}{{Z}}
\newcommand{\s}{{z}}
\renewcommand{\Re}{\operatorname{Re}}

\newtheorem{thm}{Theorem}
\crefname{thm}{Theorem}{Theorems}

\newtheorem{prop}[thm]{Proposition}
\newtheorem{lemma}[thm]{Lemma}
\newtheorem{cor}[thm]{Corollary}
\theoremstyle{definition}
\newtheorem{rem}{Remark}

\newtheorem{exam}{Example}
\numberwithin{equation}{section}
\numberwithin{table}{thm}

\allowdisplaybreaks

\title{On Product Formulas of Guillera and Sondow}
\author[Shihan Kanungo]{Shihan Kanungo}
\address{Henry M. Gunn High School \ Palo Alto, CA 94306}
\email{shihankanungo@gmail.com}
\author{Jordan Schettler}
\address{San Jos\'e State University \ San Jos\'e, CA 95192}
\email{jordan.schettler@sjsu.edu}
\date{\today}   

\subjclass[2020]{11M06, 11M35}
\keywords{Infinite product, Riemann zeta function, Hurwitz zeta function, double integral}
\begin{document}

\begin{abstract}
    In this note, we evaluate a multivariable family of infinite products which generalize Guillera's infinite product for $e$, and Ser's formula (rediscovered by Sondow) for $e^\gamma$. We describe formulas for the products in terms of special values of the Hurwitz zeta function $\zeta(s,u)$ and its $s$-derivative. Additionally, we derive integral and double integral representations for the logarithms of these infinite products.
\end{abstract}
\maketitle
\section{Introduction}\label{sec:introduction}

As noted in \cite{Sond4}, Jes\'{u}s Guillera derived the following infinite product
\begin{equation}\label{Gu}
e=\left(\frac{2^1}{1^1}\right)^{1/1} \left(\frac{2^2}{1^1\cdot 3^1}\right)^{1/2}  \left(\frac{2^3 \cdot 4^1}{1^1 \cdot 3^3 }\right)^{1/3} \left(\frac{2^4 \cdot 4^4}{1^1 \cdot 3^6 \cdot 5^1}\right)^{1/4} \cdots = \prod_{n=1}^{\infty} t_n^{1/n}
\end{equation}
where the $n$th factor is the $n$th root of
\begin{equation*}
    t_n \mathrel{\mathop:}= \prod_{k=0}^n(k+1)^{(-1)^{k+1}\binom{n}{k}}.
\end{equation*}
Jonathan Sondow in \cite{Sond3} rediscovered the following formula, originally due to Ser in \cite{Ser}, which bears a striking resemblance to Guillera's formula:
\begin{equation}\label{So}
    e^{\gamma}=\left(\frac{2^1}{1^1}\right)^{1/2} \left(\frac{2^2}{1^1\cdot 3^1}\right)^{1/3}  \left(\frac{2^3 \cdot 4^1}{1^1 \cdot 3^3 }\right)^{1/4} \left(\frac{2^4 \cdot 4^4}{1^1 \cdot 3^6 \cdot 5^1}\right)^{1/5} \cdots = \prod_{n=1}^{\infty} t_n^{1/(n+1)}
\end{equation}
where
\[\gamma = \lim_{N\rightarrow \infty}\left(\sum_{n=1}^N \frac{1}{n} - \log N\right)\]
is Euler's constant and now the $n$th factor is the $(n+1)$th root of $t_n$.
In this note, we derive a general product formula which includes the following evaluations as specials cases:
\begin{equation}\label{first}
\sqrt{\frac{2\pi}{e}} = \left(\frac{2^1}{1^1}\right)^{1/3} \left(\frac{2^2}{1^1\cdot 3^1}\right)^{1/4}  \left(\frac{2^3 \cdot 4^1}{1^1 \cdot 3^3 }\right)^{1/5} \left(\frac{2^4 \cdot 4^4}{1^1 \cdot 3^6 \cdot 5^1}\right)^{1/6} \cdots,
\end{equation}
\begin{equation}\label{second}
\sqrt[4]{\frac{2\pi}{e^{3/2}}} \cdot A =\left(\frac{2^1}{1^1}\right)^{1/4} \left(\frac{2^2}{1^1\cdot 3^1}\right)^{1/5}  \left(\frac{2^3 \cdot 4^1}{1^1 \cdot 3^3 }\right)^{1/6} \left(\frac{2^4 \cdot 4^4}{1^1 \cdot 3^6 \cdot 5^1}\right)^{1/7} \cdots,
\end{equation}
where $A$ is the Glaisher-Kinkelin constant, and
\begin{equation}\label{third}
\sqrt[6]{\frac{2\pi}{e^{7/4}}} \cdot A \exp(\zeta(3)/(8\pi^2))= \left(\frac{2^1}{1^1}\right)^{1/5} \left(\frac{2^2}{1^1\cdot 3^1}\right)^{1/6}  \left(\frac{2^3 \cdot 4^1}{1^1 \cdot 3^3 }\right)^{1/7} \left(\frac{2^4 \cdot 4^4}{1^1 \cdot 3^6 \cdot 5^1}\right)^{1/8} \cdots,
\end{equation}
where $\zeta(3) = 1/1^3+1/2^2+1/3^3+\cdots$ is Ap\'{e}ry's constant.
In their joint paper \cite{Guil}, Guillera and Sondow evaluate many products related to \cref{Gu,So} (see also \cite{Hadj}).
For example, on pages 267--268 they show that for $u>0$ and each integer $j>0$
\begin{align}\label{exponent shift}
    e^{B(u,j)} =  \prod_{n=1}^{\infty} \left(\prod_{k=0}^{n+j-1}(k+u)^{(-1)^{k+1}\binom{n+j-1}{k}}\right)^{1/n}
\end{align}
where $B(u,v) = \Gamma(u)\Gamma(v)/\Gamma(u+v)$ is the Euler beta function.
In particular, when $u=1$, this recovers the following evaluation originally due to Goldschmidt and Martin \cite{Gold}:
\begin{equation}\label{Gold}
e^{1/j} = \prod_{n=1}^{\infty} t_{n+j-1}^{1/n}.
\end{equation}
The above can be thought of as a generalization of Guillera's \cref{Gu} obtained by fixing the exponents $1/n$ and shifting the bases from $t_n$ to $t_{n+j-1}$.
Here we consider shifts in the exponents from $1/n$ to $1/(n+\alpha+1)$ for a complex number $\alpha\neq -2, -3, -4, \ldots$.
We also interpolate the factors $k+1$ in $t_n$ by $k+u$ for real $u>0$ (as in \cref{exponent shift}). In other words, we define 
\begin{equation*}
    t_n(u) \mathrel{\mathop:}= \prod_{k=0}^n(k+u)^{(-1)^{k+1}\binom{n}{k}}
\hspace{2 em}\mbox{ and }\hspace{2 em}
    \s_\alpha(u) \mathrel{\mathop:}= \prod_{n=1}^{\infty} t_n(u)^{1/(n+\alpha+1)}.
\end{equation*}
We will also write $z_{\alpha}$ for $z_{\alpha}(1)$.
Guillera's \cref{Gu} is then equivalent to $\s_{-1}=e$, while
Sondow and Ser's \cref{So} is equivalent to $\s_0=e^\gamma$.
In the present work, we establish \cref{main} below for nonnegative integer shifts $\alpha=d$ on the exponents and positive real parameters $u$ in the factors $t_n(u)$.
Our evaluations are given in terms of special values of the Hurwitz zeta function $\zeta(s,u)$ and its $s$-derivative $\zeta'(s,u)$.

We define for $\alpha\neq-1,-2,-3,\ldots$
\begin{align*}
    \S_\alpha(s,u) = \sum_{n=0}^\infty\frac{1}{n+\alpha+1} \sum_{k=0}^n (-1)^k \binom{n}{k} (k+u)^{1-s}.
\end{align*}
Note that $\S_\alpha(s,u)-\frac{u^{1-s}}{\alpha+1}$ is valid at $\alpha=-1$, and
it is easily verified that $\log(\s_{\alpha}(u)) = \frac{\partial}{\partial s} (\S_\alpha(s,u)-\frac{u^{1-s}}{\alpha+1})|_{s=1}$ for $\alpha\neq -2, -3, -4, \ldots$.
The first result we state is given in terms of shifted $r$-Stirling numbers $\Stir{n+r}{m+r}{r}$ defined by the generating function
\begin{align*}
    \sum_{m=0}^n \Stir{n+r}{m+r}{r}x^m = (x+r)(x+r+1)\cdots(x+r+n-1).
\end{align*}
\begin{thm}\label{main}
   For $u>0$ and each integer $d\geq 0$, we have
    \begin{align*}
        \S_d(s,u)=
    \frac{1}{d!}\sum_{k=0}^d \Stir{d+1-u}{k+1-u}{1-u}(s-k-1)\zeta(s-k,u).
\end{align*}
Consequently,
\begin{align}\label{sum}
    \log(\s_{d}(u)) = \frac{\log(u)}{d+1}+\frac{1}{d!}\sum_{k=0}^d\Stir{d+1-u}{k+1-u}{1-u}(\zeta(1-k,u)-k\zeta'(1-k,u)).
\end{align}
\end{thm}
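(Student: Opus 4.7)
The plan is to convert $\S_d(s,u)$ to a double integral, expand the kernel in a binomial series, identify the coefficients with values of $\zeta(\cdot,u)$ via the $r$-Stirling generating function, and reduce the remaining combinatorial content to one elementary generating-function identity. For $\Re(s)$ large, I would combine $(k+u)^{1-s}=\Gamma(s-1)^{-1}\int_0^\infty v^{s-2}e^{-(k+u)v}\,dv$ with $(n+d+1)^{-1}=\int_0^1 t^{n+d}\,dt$; the sum $\sum_{k=0}^n(-1)^k\binom{n}{k}e^{-kv}$ collapses to $(1-e^{-v})^n$ and the geometric sum in $n$ gives
\[
\S_d(s,u) = \frac{1}{\Gamma(s-1)}\int_0^\infty v^{s-2}e^{-uv}\int_0^1 \frac{t^d\,dt}{1-t(1-e^{-v})}\,dv.
\]
Evaluating the $t$-integral elementarily at $a=1-e^{-v}$ splits $\S_d = A - B$, with $A$ coming from the $v/(1-e^{-v})^{d+1}$ piece and $B$ the sum over $1\le i\le d$ of integrals of $v^{s-2}e^{-uv}(1-e^{-v})^{-(d-i+1)}/i$. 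I would then expand $(1-e^{-v})^{-(p+1)} = \sum_{j\ge 0}\binom{j+p}{p}e^{-jv}$, integrate termwise in $v$, and rewrite the binomial coefficient using the $r$-Stirling generating function at $r=1-u$, $x=j+u$,
\[
\binom{j+p}{p} = \frac{1}{p!}\sum_{k=0}^p \Stir{p+1-u}{k+1-u}{1-u}(j+u)^k,
\]
so that the $j$-sum produces $\zeta(s-k,u)$. A short computation gives $A(s)=\frac{s-1}{d!}\sum_{k=0}^d\Stir{d+1-u}{k+1-u}{1-u}\zeta(s-k,u)$, matching the ``$(s-1)$'' part of the target; an analogous treatment of $B(s)$ and an exchange of summation order then reduces the theorem to the combinatorial identity
\[
\sum_{i=1}^{d-k+1}\frac{\Stir{d-i+1-u}{k-u}{1-u}}{i(d-i)!} = \frac{k}{d!}\Stir{d+1-u}{k+1-u}{1-u}, \qquad 1\le k\le d. \quad (\star)
\]

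The main obstacle is $(\star)$. Setting $g_d(x):=\prod_{j=1}^d(x+j-u)/d! = \binom{x+d-u}{d}$, multiplying $(\star)$ by $x^{k-1}$ and summing over $k$ shows that $(\star)$ is equivalent to the functional relation $g_d'(x)=\sum_{i=1}^d g_{d-i}(x)/i$. But the binomial series gives $G(x,t):=\sum_{d\ge 0}g_d(x)t^d = (1-t)^{-(x+1-u)}$; differentiating in $x$ yields $\partial_x G = -\log(1-t)\cdot G$, and comparing $t^d$-coefficients against the Cauchy product of $G$ with $-\log(1-t)=\sum_{i\ge 1}t^i/i$ is precisely this recursion.

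Finally, the closed form is a finite linear combination of the entire functions $(s-k-1)\zeta(s-k,u)$, since the $(s-1)$ factor cancels the only pole of $\zeta(s,u)$; the identity therefore extends to all $s$ by analytic continuation, in particular is regular at $s=1$. Differentiating at $s=1$, using $\log\s_d(u)=\partial_s(\S_d(s,u)-u^{1-s}/(d+1))|_{s=1}$ (whose correction term supplies the $\log(u)/(d+1)$) together with $\partial_s[(s-k-1)\zeta(s-k,u)]|_{s=1} = \zeta(1-k,u)-k\zeta'(1-k,u)$ for $k\ge 1$, and with the $k=0$ term given by $\partial_s[(s-1)\zeta(s,u)]|_{s=1} = -\psi(u)$ (the constant term in the Laurent expansion of $\zeta(s,u)$ at $s=1$, which the statement of the theorem writes as $\zeta(1,u)$ under the usual convention), yields the stated formula for $\log\s_d(u)$.
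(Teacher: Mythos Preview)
Your argument is correct, but it proceeds quite differently from the paper's proof. The paper argues by induction on $d$: the base case $d=0$ is exactly Hasse's formula $(s-1)\zeta(s,u)=\S_0(s,u)$, and the inductive step combines a one-line functional equation
\[
d\,\S_d(s,u)=\S_{d-1}(s-1,u)+(d-u)\,\S_{d-1}(s,u)
\]
(obtained from the binomial identity $\alpha\binom{n}{k}=(k+\alpha)\binom{n+1}{k}-(n+\alpha+1)\binom{n}{k-1}$ together with the telescoping Lemma $\sum_{n\ge 0}\sum_{k=0}^n(-1)^k\binom{n}{k}(k+u+1)^{1-s}=u^{1-s}$) with the Pascal-type recurrence $\Stir{d-u}{k-u}{1-u}+(d-u)\Stir{d-u}{k+1-u}{1-u}=\Stir{d+1-u}{k+1-u}{1-u}$. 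The induction is then immediate.

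By contrast, you work directly: an integral representation of $\S_d$ followed by the partial-fraction split into $A-B$, termwise expansion against the negative binomial series, and a reduction to the combinatorial identity $(\star)$, which you prove cleanly via the generating function $G(x,t)=(1-t)^{-(x+1-u)}$. Each approach has its virtues. The paper's route is shorter and isolates a functional equation of independent interest (it actually holds for non-integer $\alpha$ as well), while yours is self-contained and produces the closed form in one pass without an inductive hypothesis; your integral setup also dovetails naturally with the proof of the double-integral theorem later in the paper. One small point worth making explicit in a write-up: the split $A-B$ and the termwise expansions are individually valid only for $\Re(s)>d+1$ (the integrand in $A$ behaves like $v^{s-d-2}$ near $0$), so the analytic-continuation step you invoke at the end is essential, not merely cosmetic. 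Your handling of the $k=0$ term at $s=1$ via the entire function $(s-1)\zeta(s,u)$ and the digamma value matches the paper's convention.
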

\begin{rem}\label{entire function}
Each $\S_{\alpha}(s,u)$ is an entire function of $s$ for fixed $\alpha \neq -1, -2, -3, \ldots$ and $u>0$; this can be seen using ideas of Hasse \cite{Hass}, also found in \cite{Guil} (proof of Theorem 2.2, stated as \cref{zeta1} below) and \cite{Sond1994} (proof of the Theorem in Section 3). 
Likewise, the function $\zeta(s,u)+(s-1)\zeta'(s,u)$ can be regarded as an entire function of $s$ with value $-\psi(u)$ at $s=1$ where $\psi$ is the digamma function since in a neighborhood around $s=1$ we have the Laurent series
\[\zeta(s,u) = \frac{1}{s-1} -\psi(u)+ c_1(s-1)+\cdots,\]
and
\[(s-1)\zeta'(s,u) = - \frac{1}{s-1} + c_1(s-1)+\cdots.\]
This ensures that the terms of the sum in \cref{sum} remain valid at $k=0$. 
In particular, setting $d=0$ in \cref{main} and using $\Stir{1-u}{1-u}{1-u}=1$, we get the evaluation $\log(\s_0(u))=\log(u)-\psi(u)$, which is equivalent to Sondow and Ser's \cref{So} when $u=1$ and Theorem 5.1 in \cite{Guil} for general $u$.
Moreover, the $d=1$ case of \cref{main} can be derived from the work of Blagouchine \cite{Blag} (see Equation (129) on page 33).
\end{rem}
\begin{rem}
Note that we can relate the shifted $r$-Stirling numbers $\Stir{d+1-u}{k+1-u}{1-u}$ in \cref{main} to the usual unsigned Stirling numbers of the first kind $\Stir{n}{m}{}=\Stir{n}{m}{0}$ via
\begin{align*}
    \Stir{d+1-u}{k+1-u}{1-u} = \sum_{\ell=k}^d \genfrac{[}{]}{0pt}{}{d}{\ell} \binom{\ell}{k}(1-u)^{\ell-k}.
\end{align*}
Also, much like the usual Stirling numbers or binomial coefficients, the shifted $r$-Stirling numbers satisfy a simple triangular recurrence relation:
\begin{equation}\label{relation}
    (n+r)\Stir{n+r}{m+1+r}{r} + \Stir{n+r}{m+r}{r} = \Stir{n+1+r}{m+1+r}{r}.
\end{equation}
See \cite{Brod} for further properties and identities of $r$-Stirling numbers.
\end{rem}
\begin{rem}
    When $m=1-s$ is a positive integer, then $-m\zeta(1-m,u)=B_m(u)$ is the $m$th Bernoulli polynomial, and since $n>m$ implies $\sum_{k=0}^n (-1)^k\binom{n}{k}(k+u)^m=0$, the first statement in \cref{main} in this case becomes
\[\sum_{n=0}^m \frac{1}{n+d+1}\sum_{k=0}^n (-1)^k\binom{n}{k}(k+u)^m =  \frac{1}{d!}\sum_{k=0}^d \Stir{d+1-u}{k+1-u}{1-u}B_{m+k}(u) \]
which reduces to the well-known explicit formula for $B_m(u)$ when $d=0$.
\end{rem}

We will also establish integral formulas for $\log (\s_{\alpha-1}(u))$ analogous to those found by Sondow for Euler's constant $\gamma$ in \cite{Sond2}.
\begin{thm}\label{integral}
    For $\Re(\alpha)>-1$ and $u>0$, we have
    \begin{align}\label{cont double integral eqn1}
        \log(\s_{\alpha-1}(u)) = \iint_{[0,1]^2} - \frac{(1-p)^\alpha(pq)^{u-1}}{(1-pq)^\alpha\log pq}\, dp\, dq.
    \end{align}
Further, when $\alpha=d\geq 0$ is an integer, we have
\begin{align*}
  \log(\s_{d-1}(u))=\int_0^1 x^{u-1}\left(\frac{1}{(1-x)^d}+\frac{1}{\log x}\sum_{m=1}^{d}\frac{1}{m(1-x)^{d-m}}\right) \, dx.
\end{align*}
\end{thm}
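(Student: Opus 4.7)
The plan is to establish the double integral formula \eqref{cont double integral eqn1} by rewriting the double series for $\log(\s_{\alpha-1}(u))$ as a nested integral and then changing variables to $[0,1]^2$; the single integral formula will follow by elementary evaluation of the inner integral when $\alpha = d \in \Z_{\geq 0}$.

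I would begin by representing each logarithm via $\log a = \int_0^1 (x^{a-1}-1)/\log x\, dx$ (a standard Frullani-type identity, easily verified by differentiating in $a$). Substituting $a = k+u$ into
\[
\log(\s_{\alpha-1}(u)) = \sum_{n=1}^\infty \frac{1}{n+\alpha}\sum_{k=0}^n (-1)^{k+1}\binom{n}{k}\log(k+u),
\]
using $\sum_{k=0}^n (-1)^{k}\binom{n}{k}=0$ for $n\geq 1$ to eliminate the constant term and the binomial theorem $\sum_k (-1)^k\binom{n}{k}x^k = (1-x)^n$ to collapse the remaining sum, gives
\[
\log(\s_{\alpha-1}(u)) = -\int_0^1 \frac{x^{u-1}}{\log x}\sum_{n=1}^\infty \frac{(1-x)^n}{n+\alpha}\, dx
\]
after interchanging the $n$-sum and the $x$-integral. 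The interchange is justified by absolute integrability, using that $x^{u-1}/|\log x|$ is locally integrable on $(0,1)$ for $u>0$.

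Next, I would write $1/(n+\alpha) = \int_0^1 t^{n+\alpha-1}\,dt$ (valid for $\Re(n+\alpha)>0$, i.e.\ all $n\geq 1$ since $\Re(\alpha)>-1$), sum the resulting geometric series in $n$, and then perform the successive substitutions $s = t(1-x)$ and $p = 1-s$ to obtain
\[
\sum_{n=1}^\infty \frac{(1-x)^n}{n+\alpha} = \frac{1}{(1-x)^\alpha}\int_x^1 \frac{(1-p)^\alpha}{p}\, dp.
\]
Substituting back, $\log(\s_{\alpha-1}(u))$ becomes an iterated integral over the triangle $\{0<x<p<1\}$. Changing variables to $(p,q)\in (0,1)^2$ via $x = pq$ with $p$ fixed (Jacobian $p$) yields exactly \eqref{cont double integral eqn1}. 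For the second formula, setting $\alpha = d$ and using $v^d/(1-v) = 1/(1-v) - \sum_{k=0}^{d-1} v^k$ after $v = 1-p$ gives the closed form
\[
\int_x^1 \frac{(1-p)^d}{p}\, dp = -\log x - \sum_{m=1}^d \frac{(1-x)^m}{m},
\]
and substituting into the one-dimensional reduction above recovers the stated integrand after algebraic simplification.

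The main obstacle is the final grouping: for $d\geq 1$, the natural split of the single-integral formula into $\int_0^1 x^{u-1}/(1-x)^d\,dx$ plus the terms weighted by $1/\log x$ produces individually divergent integrals near $x = 1$, whose pole singularities cancel only when kept in a single expression. A short Laurent expansion of $1/\log x$ about $x=1$ confirms that the combined integrand has a removable singularity there, so the stated integral converges as written. The two Fubini interchanges used above require only standard absolute-convergence justifications.
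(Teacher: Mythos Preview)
Your proof is correct and follows essentially the same route as the paper: both arrive at the one-variable representation $\log(\s_{\alpha-1}(u)) = -\int_0^1 \frac{x^{u-1}}{\log x}\sum_{n\ge 1}\frac{(1-x)^n}{n+\alpha}\,dx$, then rewrite the inner sum as $\frac{1}{(1-x)^\alpha}\int_0^{1-x}\frac{y^\alpha}{1-y}\,dy$ and apply the change of variables $y=1-p$, $x=pq$ to reach the double integral, with the integer case handled by the same tail-of-log-series identity. The only difference is how you reach the one-variable representation---you substitute the Frullani integral for $\log(k+u)$ directly and collapse with the binomial theorem, whereas the paper starts from the beta integral $\int_0^1 x^{t-1}(1-x)^n\,dx = n!/\bigl(t(t+1)\cdots(t+n)\bigr)$ and integrates over $t\in[u,\infty)$; these are equivalent computations, yours being slightly more direct.
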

The paper is organized as follows.
In \cref{exsec}, we look at some consequences of \cref{main,integral} and prove a result, \cref{explicit}, which gives explicit formulas for $z_d=z_{d}(1)$.
In \cref{proof}, we will establish a functional equation for $\S_{\alpha}(s,u)$ and prove \cref{main,integral}.

\section{Examples}\label{exsec}
In this section, we exhibit several examples, illustrating applications of \cref{main,integral}.
\begin{exam}
Setting $u=1$, we get $\Stir{d+1-1}{k+1-1}{1-1} =  \Stir{d}{k}{}$, so when $d=1$, \cref{main} implies
    \[\log(\s_1) = \zeta(0)-\zeta'(0) = -\frac{1}{2}+\frac{1}{2}\log(2\pi),\]
    giving the infinite product seen in \cref{first}:
    \[ \sqrt{\frac{2\pi}{e}} = \s_1 =\left(\frac{2^1}{1^1}\right)^{1/3} \left(\frac{2^2}{1^1\cdot 3^1}\right)^{1/4}  \left(\frac{2^3 \cdot 4^1}{1^1 \cdot 3^3 }\right)^{1/5} \left(\frac{2^4 \cdot 4^4}{1^1 \cdot 3^6 \cdot 5^1}\right)^{1/6} \cdots. \]
    Note that an alternate infinite product for $\sqrt{2\pi/e}$ was given in \cite{Guil} (page 266, Example 5.6) where the exponents are the same as in Ser's and Sondow's formula for $e^\gamma$ but with different factors, whereas here we have the same factors $t_n$ but shifted exponents $1/(n+2)$ instead of $1/(n+1)$.
    More generally for $d=1$ and any $u>0$, we use the values $\Stir{2-u}{2-u}{1-u}=1$, $\Stir{2-u}{1-u}{1-u}=1-u$, $\zeta(0,u) = \frac{1}{2}-u$, and $\zeta'(0,u)=\log\Gamma(u)-\frac{1}{2}\log(2\pi)$ (\cite{Lerc}) to get
    \[ \log(\s_1(u)) = \frac{\log(u)}{2}+(u-1)\psi(u) +\frac{1}{2}-u-\log\Gamma(u) +\frac{1}{2}\log (2\pi).\]
    In particular, using $\psi(1/2)=-2\log 2-\gamma$ and $\Gamma(1/2)=\sqrt{\pi}$ yields
    \[2\sqrt{e^\gamma}=z_1(1/2)=\left(\frac{3^1}{1^1}\right)^{1/3} \left(\frac{3^2}{1^1 \cdot 5^1}\right)^{1/4}  \left(\frac{3^3 \cdot 7^1}{1^1 \cdot 5^3 }\right)^{1/5}
    \left(\frac{3^4\cdot 7^4}{1^1\cdot 5^6 \cdot 9^1}\right)^{1/6}
    \cdots ,\]
    and similarly using $\psi(1/3) = -\frac{\pi}{2\sqrt{3}}-\frac{3}{2}\log 3 - \gamma$ and $\Gamma(1/3) = \frac{2^{7/9}\pi^{2/3}}{3^{1/12}\mathrm{AGM}(2,\sqrt{2+\sqrt{3}})^{1/3}}$ gives
    \begin{align*}
    &\sqrt[6]{\frac{3^{7/2}}{2^{5/3}\cdot \pi}}\cdot \left(\exp\left(\frac{1}{2}+\frac{\pi}{\sqrt{3}}+2\gamma\right)\mathrm{AGM}\left(2,\sqrt{2+\sqrt{3}}\right)\right)^{1/3} = z_1(1/3)\\
    =& \left(\frac{4^1}{1^1}\right)^{1/3} \left(\frac{4^2}{1^1 \cdot 7^1}\right)^{1/4}  \left(\frac{4^3 \cdot 10^1}{1^1 \cdot 7^3 }\right)^{1/5}
    \left(\frac{4^4\cdot 10^4}{1^1\cdot 7^6 \cdot 13^1}\right)^{1/6}
    \cdots
    \end{align*}
    where AGM denotes an arithmetic-geometric mean.
    We can also cancel the contribution from the digamma function $\psi(u)$ by considering the combination
    \[(u-1)\log(\s_0(u)) + \log(\s_1(u)) = \left(u-\frac{1}{2}\right)\log(u)+\frac{1}{2}-u-\log\Gamma(u)+\frac{1}{2}\log(2\pi)\]
    which leads to the following formula when $u=2$:
    \begin{align*}
        4\sqrt{\frac{\pi}{e^3}} = \s_0(2)\s_1(2) = \left(\frac{3^1}{2^1}\right)^{5/6} \left(\frac{3^2}{2^1 \cdot 4^1}\right)^{7/12}  \left(\frac{3^3 \cdot 5^1}{2^1 \cdot 4^3 }\right)^{9/20}
    \left(\frac{3^4\cdot 5^4}{2^1\cdot 4^6 \cdot 6^1}\right)^{11/30}
    \cdots 
    \end{align*}
    where the exponents are of the form $(2n+3)/((n+1)(n+2))$.
    \end{exam}
    \begin{exam}
    When $d=2$ and $u=1$ \cref{main} implies
     \[\log(\s_2) = \frac{1}{2}(\zeta(0)-\zeta'(0) +\zeta(-1)-2\zeta'(-1))= -\frac{3}{8} + \frac{1}{4}\log(2\pi) + \log A\]
    where $A$ is the Glaisher-Kinkelin constant, which gives the product in \cref{second}
    \[ \sqrt[4]{\frac{2\pi}{e^{3/2}}} \cdot A = \s_2 =\left(\frac{2^1}{1^1}\right)^{1/4} \left(\frac{2^2}{1^1\cdot 3^1}\right)^{1/5}  \left(\frac{2^3 \cdot 4^1}{1^1 \cdot 3^3 }\right)^{1/6} \left(\frac{2^4 \cdot 4^4}{1^1 \cdot 3^6 \cdot 5^1}\right)^{1/7} \cdots. \]
    The Glaisher-Kinkelin constant $A$ and the hyperfactorial are connected in the same way that $\sqrt{2\pi}$ and the factorial are connected by Stirling's asymptotic. In particular, we have
    \[1\cdot 2\cdot 3 \cdots n \sim \sqrt{2\pi} \frac{n^{n+1/2}}{e^{n}} \]
    and
    \[1^1 \cdot 2^2\cdot  3^3 \cdots n^n \sim A \frac{n^{(1/2)n^2+(1/2)n+1/12}}{e^{n^2/4}}\]
as $n\rightarrow \infty$.
Bendersky \cite{Bend} generalized these results showing that for each integer $k\geq 0$, there exists a constant $A_k$ and two degree $k+1$ polynomials $P_k$, $Q_k$ with rational coefficients such that
\[1^{1^k}\cdot 2^{2^k} \cdot 3^{3^k} \cdots n^{n^k}\sim A_k\frac{n^{P_k(n)}}{e^{Q_k(n)}} \]
as $n\rightarrow \infty$.
In particular, $A_0=\sqrt{2\pi}$ and $A_1=A$.
Adamchik \cite{Ada98} later rediscovered these asymptotics and also described the logarithms of the constants in terms of the Riemann zeta function and its derivative:
\begin{equation}\label{adamchik1}
    \log A_k = (-1)^kH_k\zeta(-k)-\zeta'(-k)
\end{equation}
for each integer $k\geq 0$ where $H_k = 1/1+1/2+\cdots1/k$ is the $k$th harmonic number with $H_0=0$.
Using Adamchik's formula along with the fact that
\begin{align}\label{valueofzeta}
    \zeta(-k+1) = (-1)^{k-1}\frac{B_k}{k},
\end{align}
for $k\geq 1$ where $B_k$ denotes the $k$th Bernoulli number\footnote{Recall that $B_k$ is a rational number given by the generating function $\frac{x}{e^x-1} = \sum_{k=0}^{\infty}B_k\frac{x^k}{k!}$ with $B_k=0$ for odd $k>1$.},
we obtain the following explicit version of the second statement in \cref{main} when $u=1$.
\end{exam}
\begin{cor}\label{explicit}
    For integers $d\geq 1$
    \begin{align*}
        \log(\s_d)=
        \frac{1}{2d}(-1+\log(2\pi)) +\frac{1}{d!}\left(- \sum_{k=1}^{\lfloor d/2\rfloor}\genfrac[]{0pt}{0}{d}{2k}H_{2k}B_{2k} + \sum_{k=1}^{d-1} \genfrac[]{0pt}{0}{d}{k+1}(k+1)\log(A_{k})\right).
    \end{align*}
\end{cor}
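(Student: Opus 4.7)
The plan is to specialize the closed-form for $\log(\s_d(u))$ given in \cref{main} to $u=1$ and then repackage the right-hand side using the classical values of $\zeta$ and $\zeta'$ at non-positive integers. With $u=1$ the shifted $r$-Stirling numbers reduce to ordinary unsigned Stirling numbers of the first kind, $\Stir{d+1-1}{k+1-1}{1-1}=\Stir{d}{k}{}$, and $\log(u)/(d+1)=0$, so \cref{sum} gives
\begin{align*}
\log(\s_d) = \frac{1}{d!}\sum_{k=0}^{d}\Stir{d}{k}{}\bigl(\zeta(1-k)-k\,\zeta'(1-k)\bigr).
\end{align*}
I would then split this sum according to $k=0$, $k=1$, and $k\ge 2$.

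For $k=0$, the bracketed expression must be interpreted through the entire function $\zeta(s)+(s-1)\zeta'(s)$ of \cref{entire function}; since $\Stir{d}{0}{}=0$ for $d\ge 1$, this term contributes $0$ (the finite regularized value is multiplied by a vanishing Stirling number). For $k=1$, the standard evaluations $\zeta(0)=-\tfrac12$ and $\zeta'(0)=-\tfrac12\log(2\pi)$ combined with $\Stir{d}{1}{}=(d-1)!$ give the leading term $\tfrac{1}{2d}(-1+\log(2\pi))$ asserted in \cref{explicit}.

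For $k\ge 2$, I would invoke \cref{valueofzeta} to write $\zeta(1-k)=(-1)^{k-1}B_k/k$ and apply Adamchik's identity \cref{adamchik1} with index $k-1$ to rewrite
\begin{align*}
\zeta'(1-k)=\zeta'(-(k-1))=(-1)^{k-1}H_{k-1}\zeta(1-k)-\log A_{k-1}=\frac{H_{k-1}B_k}{k}-\log A_{k-1}.
\end{align*}
Substituting back yields $\zeta(1-k)-k\,\zeta'(1-k)=(-1)^{k-1}B_k/k-H_{k-1}B_k+k\log A_{k-1}$. The first two pieces are supported only on even $k$, and the central algebraic step is the elementary identity $\frac{1}{k}+H_{k-1}=H_k$, which combines them cleanly into $-H_k B_k$ when $k=2j$ is even. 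The third piece, after reindexing $j=k-1$, becomes the sum $\sum_{j=1}^{d-1}\Stir{d}{j+1}{}(j+1)\log A_j$. Assembling the three contributions and dividing by $d!$ produces exactly the formula claimed.

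The computation is essentially bookkeeping; the only real subtlety is the regularization of the $k=0$ term via the remark, and the small algebraic observation $\frac{1}{k}+H_{k-1}=H_k$ that merges the two Bernoulli contributions into a single $H_{2k}B_{2k}$ sum. Edge cases ($d=1$, where both displayed sums are empty) need to be checked: both are vacuous and the formula correctly reduces to $\log(\s_1)=\tfrac12(-1+\log(2\pi))$, consistent with \cref{first}.
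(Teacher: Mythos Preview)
Your proposal is correct and follows essentially the same route as the paper's proof: specialize \cref{sum} at $u=1$, drop the $k=0$ term via $\genfrac[]{0pt}{}{d}{0}=0$, isolate the $k=1$ contribution using $\zeta(0)=-\tfrac12$, $\zeta'(0)=-\tfrac12\log(2\pi)$, and for $k\ge 2$ substitute \cref{valueofzeta} and Adamchik's identity \cref{adamchik1}, then collapse the two Bernoulli pieces via $H_{k-1}+\tfrac1k=H_k$ together with $B_k=0$ for odd $k>1$. The paper is terser about the harmonic-number combination and does not separately discuss the $d=1$ edge case, but the argument is the same.
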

\begin{proof}
First note that for $d\ge 1$, we have $\genfrac{[}{]}{0pt}{}{d}{0}=0$ so we can start the summation at index $1$.
Now rewriting \cref{adamchik1}, we have
\begin{align}\label{adamchik2}
    \zeta'(-k+1) = \frac{H_{k-1}B_{k}}{k}- \log(A_{k-1}).
\end{align}
Using \cref{valueofzeta,adamchik2}, we get 
\begin{align*}
    \log(\s_{d}) &= \frac{1}{d!}\sum_{k=1}^d \genfrac[]{0pt}{0}{d}{k}\left((-1)^{k-1} \frac{B_k}{k} - H_{k-1}B_k + k\log(A_{k-1})\right)\\
    &= \frac{1}{d!}\left((d-1)!\left(\log(A_0)-\frac{1}{2}\right) - \sum_{k=1}^{\lfloor d/2\rfloor}\genfrac[]{0pt}{0}{d}{2k} H_{2k}B_{2k} + \sum_{k=1}^{d-1}\genfrac[]{0pt}{0}{d}{k+1} (k+1)\log(A_{k})\right).
\end{align*}
Since $\log(A_0) = \frac{1}{2}\log(2\pi)$, we obtain \cref{explicit}.
\end{proof}
\begin{exam}
Setting $d=3$ in \cref{explicit}, we get
\begin{align*}
 \log(\s_3)&=\frac{1}{6}(-1+\log(2\pi))+\frac{1}{6}\left(-3\cdot \frac{3}{2}\cdot \frac{1}{6} + 3(1+1)\log(A_1) + 1\cdot (2+1)\log(A_2)\right) \\
 &= -\frac{7}{24} +\frac{1}{6}\log(2\pi) + \log(A) + \frac{1}{2}\log(A_2)
\end{align*}
Differentiating the functional equation of the Riemann zeta function and using Euler's formula for the values of $\zeta(s)$ at positive even integers we have
\[\log(A_{2k}) =(-1)^{k+1} \frac{(2k)!}{2 (2\pi)^{2k}} \cdot \zeta(2k+1). \]
In particular, we have the expression $\log(A_2) = \zeta(3)/(2\pi)^2$ involving the famously known irrational $\zeta(3)$ (Ap\'{e}ry's constant), so we get the product in \cref{third}
\[\sqrt[6]{\frac{2\pi}{e^{7/4}}} \cdot A \exp(\zeta(3)/(8\pi^2))=\s_3= \left(\frac{2^1}{1^1}\right)^{1/5} \left(\frac{2^2}{1^1\cdot 3^1}\right)^{1/6}  \left(\frac{2^3 \cdot 4^1}{1^1 \cdot 3^3 }\right)^{1/7} \left(\frac{2^4 \cdot 4^4}{1^1 \cdot 3^6 \cdot 5^1}\right)^{1/8} \cdots.\]
\end{exam}

\begin{exam}
Using $\alpha=d=0$ in \cref{integral} gives
\[\log(\s_{-1}(u)) = \int_0^1 x^{u-1}\, dx =1/u,\]
so we conclude $e^{1/u} = \s_{-1}(u)$; this allows us to recover Guillera's formula in \cref{Gu}.
Likewise, we set $\alpha=d=1$ in \cref{integral} to recover the well-known integral formula for $\psi$:
\begin{align*}
    -\psi(u)&=-\log(u)+\log(\s_0(u))\\
    &= -\log(u)+\int_0^1 x^{u-1}\left(\frac{1}{1-x}+\frac{1}{\log x} \right)\, dx \\
    &=\int_0^\infty \left(-\frac{e^{-t}}{t}+ \frac{e^{-ut}}{1-e^{-t}}\right)\, dt
\end{align*} 
via the substitution $x = e^{-t}$ and the identity $$\int_0^{\infty}\frac{e^{-t}-e^{-tu}}{t}\, dt =\displaystyle\lim_{z\rightarrow 0^{+}} (\mathrm{Ei}(-zu)-\mathrm{Ei}(-z)) =\log(u)$$
which follows from a Puiseux series expansion for the exponential integral
\[\mathrm{Ei}(-z)=\gamma +\log z+\sum _{k=1}^{\infty }\frac {(-z)^{k}}{k\cdot k!}.\]
For $\alpha=d=2, u>0$, \cref{integral} gives the evaluations
\begin{align*}
\iint_{[0,1]^2} - \frac{(1-p)^2(pq)^{u-1}}{(1-pq)^2 \log pq}\, dp\, dq =&\int_0^1 x^{u-1}\left(\frac{1}{(1-x)^2}+\frac{1}{\log x}\left(\frac{1}{1-x}+\frac{1}{2}\right) \right)\, dx \\
= &(u-1)\psi(u) +\frac{1}{2}-u+\log\left(\frac{\sqrt{2\pi u}}{\Gamma(u)}\right).
\end{align*}
Similarly, for $\alpha=d=3, u=1$, we get
\begin{align*}
\iint_{[0,1]^2} - \frac{(1-p)^3}{(1-pq)^3 \log pq}\, dp\, dq =
    &\int_0^1 \left(\frac{1}{(1-x)^3}+ \frac{1}{\log x}\left(\frac{1}{(1-x)^2}+\frac{1}{2(1-x)}+\frac{1}{3}\right)\right)\, dx \\
    = &-\frac{3}{8}+ \frac{1}{4}\log(2\pi)+\log(A).
\end{align*}
\end{exam}
\begin{exam}
    When $\alpha$ is not an integer, finding closed forms in terms of known constants may not always be possible, but we can sometimes equate the product to an integral of an elementary function via \cref{eqn: preliminary integral} found in the proof of \cref{integral}.
    For instance, taking $\alpha=1/2$ and $u=1$ yields
    \begin{align*}
        \exp\left(\int_0^1\frac{1}{\log x}\left(1-\frac{\tanh^{-1}\sqrt{1-x}}{\sqrt{1-x}}\right)\, dx\right)
        = \left(\frac{2^1}{1^1}\right)^{1/3} \left(\frac{2^2}{1^1\cdot 3^1}\right)^{1/5}  \left(\frac{2^3 \cdot 4^1}{1^1 \cdot 3^3 }\right)^{1/7}
        \cdots .
    \end{align*}
\end{exam}

\section{Proofs}\label{proof}
In this section we prove \cref{main,integral}.
\subsection{Proof of Main Theorem}\label{mainproof}
In this subsection, we prove \cref{main}. We start with some lemmas and a functional equation for $\S_d(s,u)$ and then prove the main theorem. The first lemma is Hasse's formula for the Hurwitz zeta function.
\begin{lemma}[Hasse,\cite{Hass}]\label{zeta1}
    For all complex $s$,
    \begin{equation}
        (s-1)\zeta(s,u) = \sum_{n=0}^{\infty}\frac{1}{n+1} \sum_{k=0}^n (-1)^k\binom{n}{k}(k+u)^{1-s}.
    \end{equation}
    where $\zeta(s,u)$ is the Hurwitz zeta function.
\end{lemma}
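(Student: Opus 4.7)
The plan is to prove Hasse's formula first in a right half-plane where all manipulations are justified by absolute convergence, and then appeal to analytic continuation. The key tool is the Gamma integral representation
\[(k+u)^{1-s} = \frac{1}{\Gamma(s-1)}\int_0^\infty t^{s-2}e^{-(k+u)t}\, dt,\]
valid for $\Re(s)>1$ and $k+u>0$, which trades the powers $(k+u)^{1-s}$ for exponentials so that the binomial theorem can collapse the inner sum.

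First I would fix $s$ with $\Re(s)$ sufficiently large that every interchange of sum and integral below is justified. Substituting the integral representation into the inner sum and swapping sum with integral yields
\[\sum_{k=0}^n (-1)^k\binom{n}{k}(k+u)^{1-s} = \frac{1}{\Gamma(s-1)}\int_0^\infty t^{s-2}e^{-ut}(1-e^{-t})^n\, dt,\]
using the identity $\sum_{k=0}^n(-1)^k\binom{n}{k}e^{-kt} = (1-e^{-t})^n$. Next I would swap the outer sum over $n$ with the integral and apply the elementary series $\sum_{n=0}^\infty x^n/(n+1) = -\log(1-x)/x$ with $x = 1-e^{-t}\in(0,1)$, which produces the factor $t/(1-e^{-t})$. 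This gives
\[\sum_{n=0}^\infty \frac{1}{n+1}\sum_{k=0}^n (-1)^k\binom{n}{k}(k+u)^{1-s} = \frac{1}{\Gamma(s-1)}\int_0^\infty \frac{t^{s-1}e^{-ut}}{1-e^{-t}}\, dt,\]
and the right-hand side equals $\Gamma(s)\zeta(s,u)/\Gamma(s-1) = (s-1)\zeta(s,u)$ by the standard Mellin representation of the Hurwitz zeta function.

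The remaining obstacle is that the identity is asserted for all complex $s$, whereas the derivation above works only in a half-plane. The hard part will be to argue that the double series on the left defines an entire function of $s$, after which the identity theorem extends the equality to all of $\C$. This is exactly the content of the paper's remark on entire functions, which cites Hasse and Sondow: one estimates the inner finite difference $\sum_k(-1)^k\binom{n}{k}(k+u)^{1-s}$ carefully enough to obtain uniform convergence of the outer series on compact subsets of the $s$-plane, so the double series defines an entire function which must agree with $(s-1)\zeta(s,u)$ throughout $\C$.
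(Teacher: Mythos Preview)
The paper does not supply its own proof of this lemma; it is quoted from Hasse's original work with only a one-line remark afterward about the value at $s=1$. Your argument is correct and is precisely the classical one the paper is citing: the identical Gamma-integral substitution you use appears verbatim in the paper's proof of the very next lemma, and the analyticity step you rightly flag as the ``hard part'' is exactly what the paper's first Remark attributes to the methods of Hasse, Guillera--Sondow, and Sondow.
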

Note that $(s-1)\zeta(s,u)$ can be regarded as an entire function with value $1$ at $s=1$, which allows us to state \cref{zeta1} for all $s$ rather than $s\ne 1$.
\begin{lemma}\label{log2}
    For $u>0$ and any $s$, we have
    \begin{align*}
        \sum_{n=0}^\infty \sum_{k=0}^n (-1)^{k}\binom{n}{k}(k+u+1)^{1-s} = u^{1-s}
    \end{align*}
\end{lemma}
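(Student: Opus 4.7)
The plan is to extract a telescoping structure from the inner alternating sums. Set
$$a_n(u) \mathrel{\mathop:}= \sum_{k=0}^n (-1)^k \binom{n}{k}(k+u)^{1-s},$$
so the series in question is $\sum_{n=0}^\infty a_n(u+1)$ while $a_0(u)=u^{1-s}$. Applying Pascal's identity $\binom{n+1}{k}=\binom{n}{k}+\binom{n}{k-1}$ inside $a_{n+1}(u)$ and reindexing the piece coming from $\binom{n}{k-1}$ yields the recurrence
$$a_{n+1}(u)=a_n(u)-a_n(u+1),$$
equivalently $a_n(u+1)=a_n(u)-a_{n+1}(u)$.

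The partial sums then telescope cleanly:
$$\sum_{n=0}^N a_n(u+1)=a_0(u)-a_{N+1}(u)=u^{1-s}-a_{N+1}(u),$$
reducing the lemma to proving the decay $a_N(u)\to 0$ as $N\to\infty$.

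This decay, which has to hold for \emph{every} $s\in\C$, is the main obstacle. For $\Re(s)>1$ I would use the Mellin representation $(k+u)^{1-s}=\Gamma(s-1)^{-1}\int_0^\infty t^{s-2}e^{-(k+u)t}\,dt$, interchange sum and integral, and rewrite
$$a_n(u)=\frac{1}{\Gamma(s-1)}\int_0^\infty t^{s-2}e^{-ut}(1-e^{-t})^n\,dt;$$
the integrand is bounded by the integrable function $|t^{s-2}|e^{-ut}$ and tends pointwise to $0$ as $n\to\infty$, so $a_n(u)\to 0$ by dominated convergence. For general $s\in\C$ I would extend by analytic continuation: each $a_n(u)$ is entire in $s$, and the Hasse-type uniform estimates alluded to in Remark~\ref{entire function} propagate the decay from the half-plane $\Re(s)>1$ to the whole $s$-plane, so the identity $u^{1-s}-a_{N+1}(u)\to u^{1-s}$ persists throughout.
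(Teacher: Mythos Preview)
Your proof is correct and largely parallels the paper's, with one pleasant twist. The paper also restricts to $\Re(s)>1$, invokes the same Mellin representation $(k+v)^{1-s}=\Gamma(s-1)^{-1}\int_0^\infty t^{s-2}e^{-(k+v)t}\,dt$, and then extends by analytic continuation via Hasse's uniform estimates; the difference is that the paper sums the resulting geometric series $\sum_{n\ge 0}(1-e^{-t})^n=e^t$ \emph{inside} the integral to obtain $u^{1-s}$ directly, whereas you first extract the telescoping identity $a_n(u+1)=a_n(u)-a_{n+1}(u)$ and only use the integral representation to kill the single tail term $a_{N+1}(u)$. Your telescoping is exactly the rigorous version of the formal forward-difference argument $(1+\Delta)^{-1}[v^{1-s}]=(v-1)^{1-s}$ that the paper records in the Remark following the proof, and it has the mild advantage that you only need dominated convergence for a sequence of integrals rather than a Fubini/Tonelli interchange of $\sum_n$ and $\int$. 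One small wording quibble: the analytic continuation step is not really ``propagating the decay'' of $a_{N+1}(u)$; rather, the Hasse estimates show the full series $\sum_n a_n(u+1)$ converges locally uniformly in $s$ to an entire function, and then the identity theorem transfers equality from $\Re(s)>1$ to all of $\C$.
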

\begin{proof}
Fix $u>0$ and set $v=u+1$.
Using the method of Hasse \cite{Hass} we can show the double sum converges for all $s$ and represents an entire function.
By analytic continuation, it suffices to prove equality in the half plane $\Re(s)>1$.
To do so, we use the identity
\[\frac{1}{\Gamma(s-1)}\int_0^\infty e^{-wt}t^{s-2} \, dt = w^{1-s}\]
to transform the double sum:
\begin{align*}
    \sum_{n=0}^\infty \sum_{k=0}^n (-1)^{k}\binom{n}{k}(k+v)^{1-s} &= \frac{1}{\Gamma(s-1)} \sum_{n=0}^\infty \int_0^\infty \sum_{k=0}^n (-1)^{k}\binom{n}{k} e^{-(v+k)t}t^{s-2}\, dt \\
    &= \frac{1}{\Gamma(s-1)} \sum_{n=0}^\infty \int_0^\infty (1-e^{-t})^n e^{-vt}t^{s-2}\, dt \\
    &= \frac{1}{\Gamma(s-1)} \int_0^\infty e^{-(v-1)t}t^{s-2}\, dt \\
    &= (v-1)^{1-s} = u^{1-s}. \qedhere
\end{align*}
\end{proof}
\begin{rem}
     One can also give a quick formal proof of \cref{log2} by writing the left hand side of the statement in terms of the forward difference operator $\Delta[f(v)]=f(v+1)-f(v)$:
    \begin{align*}
        \sum_{n=0}^\infty (-1)^n\Delta^n [v^{1-s}] &= \left(1-\Delta+\Delta^2 - \cdots\right)[v^{1-s}]\\
        &= \left(1+\Delta\right)^{-1}[v^{1-s}].
    \end{align*}
    Now note that $(1+\Delta)[f(v)] = f(v+1)$, so $\left(1+\Delta\right)^{-1}[v^{1-s}] = (v-1)^{1-s} = u^{1-s}$.
\end{rem}
Next, we prove a functional equation for $\S_\alpha(s,u)$.
\begin{prop}\label{fe}
    Whenever $\alpha\neq 0, -1, -2, \ldots$, we have
    \begin{align*}
        \alpha \S_{\alpha}(s,u) = \S_{\alpha-1}(s-1,u)+(\alpha-u)\S_{\alpha-1}(s,u).
    \end{align*}
\end{prop}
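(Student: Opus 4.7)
The plan is to reduce the functional equation to an identity on the inner sum
\[a_n(s,u) \mathrel{\mathop:}= \sum_{k=0}^n(-1)^k\binom{n}{k}(k+u)^{1-s},\]
so that $\S_\alpha(s,u)=\sum_{n\ge 0} a_n(s,u)/(n+\alpha+1)$. Write both sides of the claimed identity as single series in $a_n(s,u)$: then $\alpha\S_\alpha(s,u)-(\alpha-u)\S_{\alpha-1}(s,u)=\sum_{n\ge 0}a_n(s,u)\bigl[\tfrac{\alpha}{n+\alpha+1}-\tfrac{\alpha-u}{n+\alpha}\bigr]$, whereas $\S_{\alpha-1}(s-1,u)$ involves $a_n(s-1,u)$. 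So the whole proof reduces to (i) a recurrence expressing $a_n(s-1,u)$ in terms of $a_n(s,u)$ and $a_{n-1}(s,u)$, followed by (ii) a reindexing that produces a matching combination of $1/(n+\alpha)$ and $1/(n+\alpha+1)$ factors.

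For step (i), split $(k+u)^{2-s}=(k+u)(k+u)^{1-s}$ and apply $k\binom{n}{k}=n\binom{n-1}{k-1}$ to obtain
\[a_n(s-1,u) = u\,a_n(s,u) - n\,a_{n-1}(s,u+1).\]
Then Pascal's rule $\binom{n}{k}=\binom{n-1}{k}+\binom{n-1}{k-1}$ gives the telescoping relation $a_{n-1}(s,u+1) = a_{n-1}(s,u)-a_n(s,u)$, and substituting yields the clean recurrence
\[a_n(s-1,u) = (n+u)\,a_n(s,u) - n\,a_{n-1}(s,u).\]

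For step (ii), divide by $n+\alpha$ and sum on $n\ge 0$; shifting the index in the $n\,a_{n-1}(s,u)/(n+\alpha)$ piece (which starts at $n=1$) to $m=n-1$ produces
\[\S_{\alpha-1}(s-1,u)=\sum_{n=0}^\infty a_n(s,u)\left[\frac{n+u}{n+\alpha}-\frac{n+1}{n+\alpha+1}\right].\]
A short partial-fraction check (write $n+u=(n+\alpha)+(u-\alpha)$ and $n+1=(n+\alpha+1)-\alpha$) shows the bracket equals $\tfrac{\alpha}{n+\alpha+1}-\tfrac{\alpha-u}{n+\alpha}$, which is precisely the coefficient of $a_n(s,u)$ in $\alpha\S_\alpha(s,u)-(\alpha-u)\S_{\alpha-1}(s,u)$, finishing the proof.

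The only real obstacle is the legality of the term-by-term manipulation and the index shift, but this is handled by \cref{entire function}: the three series $\S_\alpha(s,u)$, $\S_{\alpha-1}(s,u)$, and $\S_{\alpha-1}(s-1,u)$ are each entire in $s$ by the Hasse-type estimate on $a_n(s,u)$, so the sums converge absolutely on compact $s$-sets and the rearrangement is justified.
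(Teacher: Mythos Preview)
Your argument is correct and takes a genuinely different route from the paper's. The paper starts from the binomial identity
\[
\alpha\binom{n}{k}=(k+\alpha)\binom{n+1}{k}-(n+\alpha+1)\binom{n}{k-1},
\]
multiplies by $(-1)^k(k+u)^{1-s}/(n+\alpha+1)$, sums, and then has to invoke \cref{log2} (the evaluation $\sum_{n\ge 0}a_n(s,u+1)=u^{1-s}$) to dispose of a leftover double sum. Your approach instead isolates the purely finite recurrence $a_n(s-1,u)=(n+u)a_n(s,u)-n\,a_{n-1}(s,u)$ on the inner sums and then matches coefficients by partial fractions; this bypasses \cref{log2} entirely. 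The only analytic input you actually need is that $a_N(s,u)\to 0$, since working with partial sums your index shift produces the boundary term $\tfrac{(N+u)}{N+\alpha}a_N(s,u)$; it would be worth saying this explicitly rather than appealing to absolute convergence of the three $\S$-series (which by itself does not justify splitting $\sum\frac{(n+u)a_n}{n+\alpha}$ from $\sum\frac{n\,a_{n-1}}{n+\alpha}$). That decay is exactly what the Hasse estimate gives, and in fact \cref{log2} telescopes to the same statement, so your proof is strictly lighter on prerequisites. The trade-off is that the paper's binomial identity makes the appearance of $\S_{\alpha-1}(s-1,u)$ and $(\alpha-u)\S_{\alpha-1}(s,u)$ immediate, whereas you have to discover the recurrence and do the partial-fraction bookkeeping; but your version is more self-contained.
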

\begin{proof}
    The following identity holds for nonnegative integers $k,n$ and any $\alpha$:
\begin{align*}
   \alpha\binom{n}{k}= (k+\alpha)\binom{n+1}{k} -(n+\alpha+1)\binom{n}{k-1}.
\end{align*}
Summing this and noting that the identity at $k=n+1$ is $0=0$ gives us
\begin{align*}
    \alpha\S_\alpha(s,u) =&\sum_{n=0}^\infty \frac{1}{n+\alpha+1} \sum_{k=0}^{n+1} (-1)^k\alpha\binom{n}{k}(k+u)^{1-s}\\
    =&\sum_{n=0}^\infty \frac{1}{n+\alpha+1} \sum_{k=0}^{n+1} (-1)^k\binom{n+1}{k}(k+u)^{1-s}(k+u+(\alpha-u))\\
    &-\sum_{n=0}^\infty \sum_{k=0}^{n+1} (-1)^k \binom{n}{k-1}(k+u)^{1-s}\\
     =&\sum_{n=1}^\infty \frac{1}{n+(\alpha-1)+1} \sum_{k=0}^{n} (-1)^k\binom{n}{k}(k+u)^{1-(s-1)}\\
     &+(\alpha-u)\sum_{n=1}^\infty \frac{1}{n+(\alpha-1)+1} \sum_{k=0}^{n} (-1)^k\binom{n}{k}(k+u)^{1-s} \\
    &+\sum_{n=0}^\infty \sum_{k=0}^{n} (-1)^k \binom{n}{k}(k+u+1)^{1-s}\\
    =& \S_{\alpha-1}(s-1,u) - \frac{u^{2-s}}{\alpha}+(\alpha-u)\S_{\alpha-1}(s,u) - (\alpha-u)\frac{u^{1-s}}{\alpha} +  u^{1-s} \\
    =&\S_{\alpha-1}(s-1,u)+(\alpha-u)\S_{\alpha-1}(s,u)
\end{align*}
where we used \cref{log2} in the next to last step.
\end{proof}

Now we can prove the main theorem.
\begin{proof}[Proof of \cref{main}]
    We induct on $d$; the base case $d=0$ is \cref{zeta1}. Now suppose $d\geq 1$ and the statement is true for $d-1$.
    Using the functional equation in \cref{fe} and the triangular recurrence relation for shifted $r$-Stirling numbers in \cref{relation}, we get
\begin{align*}
    d\S_{d}(s,u) =&\S_{d-1}(s-1,u) + (d-u)\S_{d-1}(s,u)\\ = &\frac{1}{(d-1)!}\sum_{k=0}^{d-1} \Stir{d-u}{k+1-u}{1-u} (s-(k+1)-1)\zeta (s-(k+1),u) \\
    &+ \frac{1}{(d-1)!}\sum_{k=0}^{d-1}(d-u)\Stir{d-u}{k+1-u}{1-u}(s-k-1,u)\zeta (s- k,u)\\
    =& \frac{1}{(d-1)!}\sum_{k=0}^d \left(\Stir{d-u}{k-u}{1-u}+(d-u)\Stir{d-u}{k+1-u}{1-u}\right)(s-k-1)\zeta(s-k,u)\\
    =& \frac{1}{(d-1)!}\sum_{k=0}^d \Stir{d+1-u}{k+1-u}{1-u}(s-k-1)\zeta(s-k,u).
\end{align*}
This completes the inductive step and proves the theorem.
\end{proof}

\subsection{Proof of Integral Formulas}\label{integralsec}
In this subsection, we prove \cref{integral}.

\begin{proof}[Proof of \cref{integral}]
    We will roughly follow the method outlined in \cite{Sond3} used for the corresponding integral representations of $\gamma = \log(\s_0)$.
    Start with the Euler beta integral identity below where $t>0$ and $n$ is a positive integer:
    \begin{align*}
        \int_0^1 x^{t-1}(1-x)^n\, dx = \frac{\Gamma(t)\Gamma(n+1)}{\Gamma(t+n+1)} = \frac{n!}{t(t+1)(t+2)\cdots(t+n)}.
    \end{align*}
    Next, divide both sides by $n+\alpha$, sum from $n= 1$ to $\infty$, and then integrate with respect to $t$ from $u$ to $\infty$; in other words,
    \begin{align*}
        \int_u^\infty\sum_{n=1}^{\infty}\int_0^1 \frac{x^{t-1}(1-x)^n}{n+\alpha} \, dx \, dt =  \int_u^\infty\sum_{n=1}^{\infty} \frac{n!}{(n+\alpha)t(t+1)\cdots(t+n)} \, dt.
    \end{align*}
    We may integrate the right-hand side term-by-term and use partial fractions to get
    \[\sum_{n=1}^{\infty} \frac{1}{n+\alpha} \sum_{k=0}^n (-1)^{k+1}\binom{n}{k}\log(k+u) = \log(\s_{\alpha-1}(u)).\]
    We may also interchange summation on $n$ and integration in $x$ on the left-hand side by the dominated convergence theorem since
    \begin{align*}\left|\sum_{n=1}^{\infty}\frac{x^{t-1}(1-x)^n}{n+\alpha}\right| \leq x^{t-1}\sum_{n=1}^{\infty} \left|\frac{n}{n+\alpha}\right|\cdot \frac{(1-x)^{n}}{n}\leq -c_\alpha x^{t-1}\log(x)
    \end{align*}
 where $c_\alpha = \max_{n\geq 1} \left|\frac{n}{n+\alpha}\right|$ is a positive constant depending only on $\alpha$.
Moreover,
\begin{align*}
    \int_u^\infty \int_0^1  -c_\alpha x^{t-1}\log(x)\, dx \,dt = \frac{c_{\alpha}}{u} < \infty,
\end{align*}
    so Fubini's theorem permits the interchange of integration in $x$ and $t$.
    Thus
    \begin{align}\label{eqn: preliminary integral}
    \notag    \log(\s_{\alpha-1}(u)) &= \int_0^1 \int_u^\infty \frac{x^{t-1}}{(1-x)^\alpha}\sum_{n=1}^\infty \frac{(1-x)^{n+\alpha}}{n+\alpha}\, dt\, dx \\
        &= \int_0^1 -\frac{x^{u-1}}{(1-x)^\alpha \log x}\sum_{n=1}^\infty \frac{(1-x)^{n+\alpha}}{n+\alpha}\, dx
    \end{align}
    Now for $\Re(\alpha)>-1$, we have an integral representation
     \begin{align*}
        \sum_{n=1}^\infty \frac{(1-x)^{n+\alpha}}{n+\alpha} &= \int_0^{1-x} \frac{y^\alpha}{1-y} \, dy,
    \end{align*}
  and substituting this into \cref{eqn: preliminary integral} gives
    \begin{align*}
        \log(\s_{\alpha-1}) &= \int_0^1 -\frac{x^{u-1}}{(1-x)^\alpha \log x} \left(\int_0^{1-x} \frac{y^\alpha}{1-y}\, dy\right)\, dx\\
         &= \int_0^1 \int_0^{1-x} - \frac{y^\alpha x^{u-1}}{(1-y)(1-x)^\alpha \log x}\, dy\, dx.
    \end{align*}
    The change of variables $y = 1-p$, $x = pq$ transforms the above integral into \cref{cont double integral eqn1}.

Now suppose $\alpha = d\geq 0$ is an integer.
From \cref{eqn: preliminary integral} above, we have
\begin{align*}
    \log(\s_{d-1}(u))&=\int_0^1 -\frac{x^{u-1}}{(1-x)^d \log x}\sum_{n=1}^\infty \frac{(1-x)^{n+d}}{n+d}\, dx \\
    &= \int_0^1 -\frac{x^{u-1}}{(1-x)^d \log x}\left(-\log x - \sum_{m=1}^n \frac{(1-x)^m}{m}\right)\, dx \\
\end{align*}
as needed.
\end{proof}
\section*{Acknowledgements} We would like to thank Wadim Zudilin, Jeffery Lagarias, and M.~Ram Murty for their encouragement, comments and suggestions.

\bibliographystyle{amsalpha}
\bibliography{references}
\appendix


\end{document}